\theoremstyle{plain}
\newtheorem{theorem}{Theorem}
\newtheorem{lemma}{Lemma}
\newtheorem{remark}{Remark}
\newtheorem{proposition}{Proposition}
\theoremstyle{definition}
\begin{document}

\title{Kernels of splitting homomorphisms}
\author{Michael R. Klug}
\maketitle

\begin{abstract}
   Lei and Wu have given a description of the second homotopy group of a closed orientable 3-manifold in terms of the kernels of the epimorphisms from the fundamental group of a Heegaard splitting surface onto the fundamental groups of the two handlebody sides.  In this note, we give a geometric derivation of this result and collect some observations about the relation between the various groups and the topology of the 3-manifold and the Heegaard splitting.   
\end{abstract} 

\section{Introduction} \label{sec:intro}

The role of the fundamental group in 3-dimensional topology is central.  In \cite{stallings66hownot}, Stallings put forth an approach to the 3-dimensional Poincar\'e conjecture by way of the group theory associated to a Heegaard splitting of a homotopy 3-sphere.  This approach was furthered by Jaco \cite{jaco1969splitting} who proved that several group-theoretic statements involving free groups and surface groups were equivalent to the Poincar\'e conjecture.  

Let $M$ be a compact orientable 3-manifold and let $M = H_\alpha \cup H_\beta$ be a Heegaard splitting of $M$ with $\Sigma = H_\alpha \cap H_\beta$ an orientable surface of genus $g$.
Fix a point $\ast$ in $\Sigma$ and consider the associated pushout diagram of groups, with morphisms induced by inclusions.  
\[
\begin{tikzcd}	
	\pi_1(\Sigma, \ast) \ar[r, two heads] \ar[d, two heads] &
	\pi_1(H_\alpha,\ast) \ar[d, two heads] \\
	\pi_1(H_\beta, \ast) \ar[r, two heads] & \pi_1(M,\ast) 
\end{tikzcd}
\]

The maps from the surface group to the free groups
of the handlebodies are surjective as any curve in a handlebody can be made to miss the spine of the handlebody and thereby can be homotoped to the boundary.  The map $\phi : \pi_1(\Sigma) \to \pi_1(H_\alpha) \times \pi_1(H_\beta)$ is the \emph{splitting homomophism} associated to the Heeagard surface $\Sigma$.  Jaco showed how all of the topology of $M$ is encoded in the splitting homomorphism \cite{jaco1969splitting}.   From here out, we will not generally mention the basepoints explicitly.  

Building on the approach of Stallings and Jaco, Hempel \cite{hempel3manifolds} showed the condition in the following theorem is equivalent to the Poincar\'e conjecture and therefore, by the work of Perelman \cite{perelman1}, \cite{perelman2}, it follows:

\begin{theorem}(Perelman)
    
For all integers $g \geq 0$ and any pair of surjective homomorphisms $\phi_1, \phi_2:  \pi_1(\Sigma) \to F_g \times F_g$ where $\Sigma$ is a genus $g$ closed orientable surface, and $F_g$ is a rank $g$ free group, there is an isomorphism 

\[
\begin{tikzcd}
\pi_1(\Sigma) \ar[r, "\cong"] & \pi_1(\Sigma) 
\end{tikzcd}
\]
and a pair of isomorphisms
\[
\begin{tikzcd}
F_g \ar[r, "\cong"] & F_g 
\end{tikzcd}
\]
such that the following commutes
\[
	 \begin{tikzcd}[column sep=large]
		 \pi_1(\Sigma) \ar[r, "\phi_1", two heads] \ar[d, "\cong"]       & F_g \times F_g  \ar[d,"\cong", xshift=2.0ex] \ar[d,"\cong", xshift=-2.0ex] \\ 
		 \pi_1(\Sigma) \ar[r, "\phi_2", two heads]            & F_g \times F_g
	 \end{tikzcd}
\]

In other words, there is a unique surjective homomorphism $\pi_1(\Sigma) \to F_g \times F_g$ up to pre- and post-composition with automorphisms (where post-composition is by automorphisms that are a product of automorphisms on the $F_g$ factors).  
\end{theorem}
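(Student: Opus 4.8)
The plan is to realize the algebraic data geometrically and then feed it into Perelman's solution of the Poincar\'e conjecture together with Waldhausen's uniqueness theorem for Heegaard splittings of $S^3$. Throughout write $\phi_\alpha, \phi_\beta : \pi_1(\Sigma) \to F_g$ for the two coordinate maps of a surjection $\phi$; since the projections $F_g \times F_g \to F_g$ are surjective, each of $\phi_\alpha, \phi_\beta$ is itself surjective. It suffices to show that an arbitrary surjection $\phi$ is equivalent (in the sense of the statement) to the splitting homomorphism $\phi_{\mathrm{std}}$ of the standard genus $g$ Heegaard splitting of $S^3$; the general case follows by transitivity.

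First I would invoke the realization statement implicit in Jaco's encoding: an epimorphism $\pi_1(\Sigma_g) \twoheadrightarrow F_g$ has kernel equal to the normal closure of a cut system (a collection of $g$ disjoint simple closed curves whose complement is planar), so it is induced, up to an automorphism of $F_g$ and a homeomorphism of $\Sigma$, by the inclusion of $\Sigma_g$ as the boundary of a genus $g$ handlebody. Applying this to $\phi_\alpha$ and to $\phi_\beta$ produces handlebodies $H_\alpha, H_\beta$ and a gluing $M_\phi = H_\alpha \cup_\Sigma H_\beta$ whose splitting homomorphism is $\phi$. This construction is natural in exactly the way required: precomposition of $\phi$ by a homeomorphism-induced automorphism of $\pi_1(\Sigma)$, and postcomposition by a product of automorphisms of the two $F_g$ factors, corresponds precisely to changing the identifications $\Sigma \cong \partial H_\alpha$ and $\Sigma \cong \partial H_\beta$, i.e. to an equivalence of Heegaard splittings that preserves the labeling of the two sides.

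Next I would show that surjectivity of $\phi$ forces $M_\phi$ to be a homotopy $3$-sphere. By van Kampen, $\pi_1(M_\phi)$ is the pushout $F_g *_{\pi_1(\Sigma)} F_g$, in which $j_\alpha(\phi_\alpha(c)) = j_\beta(\phi_\beta(c))$ for every $c \in \pi_1(\Sigma)$, where $j_\alpha, j_\beta$ are the maps from the two handlebody groups. Surjectivity supplies, for each generator $x_i$ of the $\alpha$-side, an element $c_i$ with $\phi_\alpha(c_i) = x_i$ and $\phi_\beta(c_i) = 1$, whence $j_\alpha(x_i) = 1$; symmetrically every generator of the $\beta$-side dies, so $\pi_1(M_\phi) = 1$. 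Perelman's theorem then gives $M_\phi \cong S^3$, and Waldhausen's theorem that $S^3$ has a unique Heegaard splitting of each genus up to isotopy shows that the splitting realizing $\phi$ is equivalent to the standard one. Reading this equivalence back through the naturality of the first step shows $\phi$ is equivalent to $\phi_{\mathrm{std}}$, as desired.

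The hard part is the realization step: one must know that an arbitrary epimorphism $\pi_1(\Sigma_g) \twoheadrightarrow F_g$ of matching genus and rank really is geometric, i.e. that its kernel is normally generated by a cut system and not merely by some abstract collection of $g$ normal generators. On homology this is automatic: the surjection induces an injection $\mathbb{Z}^g \cong H^1(F_g) \hookrightarrow H^1(\Sigma_g) \cong \mathbb{Z}^{2g}$ whose image is isotropic for the cup-product (symplectic) form, since all such cup products factor through $H^2(F_g) = 0$; as maximal isotropic subspaces have dimension $g$, the image is Lagrangian, exactly the homological signature of a handlebody. The genuine difficulty is promoting this homological picture to an honest geometric cut system of disjoint simple closed curves and establishing uniqueness of rank $g$ free quotients up to a homeomorphism-induced automorphism; everything else is bookkeeping around the two deep inputs, Perelman and Waldhausen. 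A secondary point requiring care is to choose the final equivalence of splittings so that it respects the labeling of the two handlebodies, guaranteeing that the induced target automorphism is a genuine product of automorphisms of the two $F_g$ factors rather than one interchanging them.
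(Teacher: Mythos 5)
Your argument is correct and is, in essence, the standard proof: it reconstructs the forward direction of Hempel's equivalence between this group-theoretic statement and the Poincar\'e conjecture, which is exactly what the paper appeals to --- the paper gives no proof of its own, citing only Hempel for the equivalence and Perelman for the Poincar\'e conjecture. The reduction you describe (geometrically realize the two coordinate epimorphisms by handlebodies, observe via van Kampen that surjectivity onto the \emph{product} kills $\pi_1$ of the resulting closed manifold, apply Perelman to identify it with $S^3$, and apply Waldhausen's uniqueness of Heegaard splittings of $S^3$ to standardize the splitting and hence the splitting homomorphism) is sound, and your van Kampen computation is complete as written. The one ingredient you invoke without proof --- that an epimorphism $\pi_1(\Sigma_g) \twoheadrightarrow F_g$ onto a free group of rank equal to the genus has kernel normally generated by a cut system --- is a genuine theorem in the literature (due to Zieschang, and used by Jaco in exactly this context), so leaning on it is no worse than the paper's wholesale citation; you correctly flag it as the crux, and your homological observation that the image of $H^1(F_g)$ is Lagrangian is the right sanity check but, as you acknowledge, not a proof of the realization. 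Your worry about the labeling of the two sides is easily dispatched: the standard genus $g$ splitting of $S^3$ admits an ambient isotopy exchanging its two handlebodies, so any equivalence of splittings can be corrected to preserve sides, ensuring the induced automorphism of $F_g \times F_g$ is a product rather than a swap. What your write-up buys beyond the paper is transparency about which deep inputs are actually in play (the Zieschang--Jaco realization, Perelman, and Waldhausen) versus the paper's single black-box citation.
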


More recently, a similar group-theoretic statement equivalent to the smooth 4-dimensional Poincar\'e conjecture has been given by Abrams, Gay, and Kirby \cite{abrams2018group}.  In considering these connections, we wanted to understand how the basic topological invariants of a space can be seen from the perspective of splitting homomorphisms.  Let $K_\alpha = \ker(\pi_1(\Sigma) \to \pi_1(H_\alpha))$ and $K_\beta = \ker(\pi_1(\Sigma) \to \pi_1(H_\beta))$.  If the handlebodies $H_\alpha$ and $H_\beta$ are described using a Heegaard diagram, then $K_\alpha$ and $K_\beta$ are normally generated by the curves describing $H_\alpha$ and $H_\beta$, respectively.    In this note, we give a geometric proof of a result of Lei and Wu \cite{lei2011kernels} that shows how to compute $\pi_2(M)$ in terms of $K_\alpha$ and $K_\beta$ (see Theorem \ref{thm:pi_2}).   

\section{Preliminary lemmas and general remarks}

We say a surface $F$ is of finite type if $\pi_1(F)$ is finitely generated, otherwise we say that $F$ has infinite type.  Surfaces of finite type are up to homeomorphism determined by their genus, number of boundary components, and number of punctures.  Surfaces of infinite type also admit a classification in terms of their genus, number of boundary components, and space of ends \cite{kerekjarto}, \cite{richards}.  

For a surface of finite type $F$ with genus greater than or equal to 2, together with a choice of hyperbolic metric on $F$, there are at most finitely many many closed geodesics of length less than a given constant $L \in \mathbb{R}$ \cite{buser2010geometry}.  

\begin{theorem}\label{thm:1}
        If $g \geq 2$, then $K_\alpha \cap K_\beta$ is a not-finitely-generated free group.  
\end{theorem}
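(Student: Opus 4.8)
The plan is to separate the statement into two independent assertions: that $N := K_\alpha \cap K_\beta$ is free, and that it fails to be finitely generated. The first is purely algebraic-topological, while the geometric input (the finiteness of short closed geodesics recorded above) is reserved for the second.

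First I would identify $N$ as the kernel of the splitting homomorphism $\phi$, so that $N$ is normal in $\pi_1(\Sigma)$ with $\pi_1(\Sigma)/N \cong \operatorname{im}(\phi) \le \pi_1(H_\alpha)\times\pi_1(H_\beta)=F_g\times F_g$. Composing $\phi$ with projection to the first factor recovers the surjection $\pi_1(\Sigma)\twoheadrightarrow F_g$, so $\pi_1(\Sigma)/N$ surjects onto $F_g$ and is infinite; hence $N$ has infinite index. The cover $\Sigma_N\to\Sigma$ corresponding to $N$ is then an infinite-sheeted cover of a closed surface, so $\Sigma_N$ is non-compact and therefore homotopy equivalent to a graph. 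This gives freeness of $N$ and disposes of the first assertion.

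For the second assertion I would first record that $N\neq 1$. Each of $K_\alpha,K_\beta$ is nontrivial, since a closed surface group of genus $\ge 2$ is not free and so neither surjection $\pi_1(\Sigma)\to F_g$ can be injective; and each is normal. In a closed surface group of genus $\ge 2$ the centralizer of any nontrivial element is infinite cyclic and the group is not virtually cyclic, from which it follows that a nontrivial normal subgroup cannot be infinite cyclic and that $K_\alpha$ and $K_\beta$ cannot centralize one another (otherwise one would lie in the cyclic centralizer of a nontrivial element of the other). Choosing $x\in K_\alpha$ and $y\in K_\beta$ with $xy\neq yx$, normality of both kernels gives $[x,y]\in K_\alpha\cap K_\beta=N$ with $[x,y]\neq 1$, so $N\neq 1$.

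Now I would argue by contradiction, assuming $N$ finitely generated. Fix a hyperbolic metric on $\Sigma$ and lift it to $\Sigma_N=\mathbb{H}^2/N$, so that the infinite deck group $Q=\pi_1(\Sigma)/N$ acts by isometries. A nontrivial $\gamma\in N$ is a hyperbolic element and is represented by a closed geodesic $c\subset\Sigma_N$; for each $q\in Q$ the translate $q(c)$ is a closed geodesic of the same length $\ell(c)$. The $Q$-stabilizer of $c$ is the image of the maximal cyclic subgroup of $\pi_1(\Sigma)$ containing $\gamma$, which is finite because $\gamma\in N$, so the orbit $Q\cdot c$ consists of infinitely many distinct closed geodesics, all of length $\ell(c)$. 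But a finitely generated subgroup of the cocompact (hence parabolic-free) Fuchsian group $\pi_1(\Sigma)$ is convex cocompact, so $\Sigma_N$ has compact convex core and admits only finitely many closed geodesics of length at most $\ell(c)$, a contradiction. I expect the last step to be the main obstacle: one must upgrade the stated finiteness, which is phrased for genus $\ge 2$ surfaces of finite type, to the cover $\Sigma_N$, whose genus one does not control, by observing that finite generation of $N$ confines all closed geodesics to the compact convex core so that the counting argument still applies.
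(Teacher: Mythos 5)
Your argument is correct and follows essentially the same route as the paper: infinite index forces the cover corresponding to $K_\alpha \cap K_\beta$ to be a noncompact surface, hence a free fundamental group, and normality makes the deck group act by isometries of a pulled-back hyperbolic metric, producing infinitely many closed geodesics of equal length and contradicting finite type. The only substantive differences are that you prove $K_\alpha \cap K_\beta \neq 1$ directly via the centralizer argument where the paper simply cites Jaco, and you make explicit the convex-core/geometric-finiteness point that justifies applying the geodesic-counting fact to the cover, which the paper leaves implicit.
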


\begin{proof}
    In \cite{jaco1970certainsubgroups}, it is shown that $K_\alpha \cap K_\beta \neq 1$.  Let $\widetilde{\Sigma}$ be the cover of $\Sigma$ corresponding to $K_\alpha \cap K_\beta$.  Then $\widetilde{\Sigma}$ is noncompact since $K_\alpha \cap K_\beta$ has infinite index in $\pi_1(\Sigma)$.  Therefore, $K_\alpha \cap K_\beta$ is free since noncompact surfaces have free fundamental groups \cite{johansson}. Since $g \geq 2$, the surface $\widetilde{\Sigma}$ obtains a hyperbolic metric by pulling back a hyperbolic metric on $\Sigma$.   Since $K_\alpha \cap K_\beta$ is normal, the deck translations of the covering act on $\widetilde{\Sigma}$ as isometries.  Let $\gamma$ be a closed geodesic in $\widetilde{\Sigma}$.  Then all of the infinite translates of $\gamma$ have the same length as $\gamma$.  Thus, $\widetilde{\Sigma}$ is of infinite type, by the preceding discussion, and therefore $K_\alpha \cap K_\beta$ is not finitely generated.  
\end{proof}

Recall that a 3-manifold $M$ is called \textit{reducible} if there is an embedded sphere in $M$ that does not bound a 3-ball in $M$, and \textit{irreducible} otherwise.
Equivalently, by the Sphere Theorem \cite{papakyriakopoulos1957dehn},
$M$ is irreducible if and only if $\pi_2(M) = 0$. 
A Heegaard splitting  $M = H_\alpha \cup_\Sigma H_\beta$ is called
\textit{reducible} if there is an essential simple closed curve in $\Sigma$
that bounds embedded disks in both $H_\alpha$ and $H_\beta$;
and \textit{irreducible} otherwise.  Haken's lemma \cite{Haken}
asserts that any Heegaard splitting of a reducible 3-manifold is reducible.

\begin{proposition}\label{no embedded}
The  Heegaard splitting $M = H_\alpha \cup_\Sigma H_\beta$  is reducible if and only if the subgroup $K_\alpha \cap K_\beta$ contains a nontrivial element that can be represented by an embedded curve.  If $M$ is reducible, then $K_\alpha \cap K_\beta$ contains a nontrivial element that can be represented by an embedded curve.   
\end{proposition}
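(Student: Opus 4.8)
The plan is to prove the biconditional directly and then read off the final sentence from Haken's lemma. I would first dispatch the easy forward implication. Suppose the splitting is reducible, so there is an essential simple closed curve $c \subset \Sigma$ bounding embedded disks $D_\alpha \subset H_\alpha$ and $D_\beta \subset H_\beta$. Bounding a disk in $H_\alpha$ makes $c$ null-homotopic there, so $[c] \in K_\alpha$; symmetrically $[c] \in K_\beta$. Since $c$ is essential in $\Sigma$, its class is nontrivial in $\pi_1(\Sigma)$, and $c$ is by hypothesis simple, hence embedded. Thus $[c]$ is a nontrivial element of $K_\alpha \cap K_\beta$ represented by an embedded curve. (Here I would remark that ``represented by an embedded curve'' is meant up to free homotopy/conjugacy, which is harmless because $K_\alpha$ and $K_\beta$, being kernels of homomorphisms, are normal, so $K_\alpha \cap K_\beta$ is normal and membership is conjugation-invariant.)

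For the reverse implication I would begin with a nontrivial $x \in K_\alpha \cap K_\beta$ represented by an embedded curve $c \subset \Sigma$. Since $x \in K_\alpha$, the curve $c$ is null-homotopic in $H_\alpha$, so there is a map $f \colon (D^2, \partial D^2) \to (H_\alpha, \Sigma)$ with $f|_{\partial D^2}$ parametrizing $c$. The point is that because $c$ is embedded, $f|_{\partial D^2}$ is an embedding, so Dehn's lemma \cite{papakyriakopoulos1957dehn} upgrades $f$ to an embedded disk in $H_\alpha$ with boundary $c$. Running the identical argument with $x \in K_\beta$ produces an embedded disk on the $H_\beta$ side. Finally $x \neq 1$ forces $c$ to be essential in $\Sigma$, so $c$ is an essential simple closed curve bounding embedded disks on both sides, which is exactly the definition of a reducible splitting.

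The closing sentence is then immediate: if $M$ is reducible, Haken's lemma \cite{Haken} forces the given Heegaard splitting to be reducible, and the forward implication just proved yields the desired nontrivial element of $K_\alpha \cap K_\beta$ represented by an embedded curve.

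I expect the only genuine content to be the reverse implication, and within it the single nontrivial step of passing from a \emph{null-homotopy} of the embedded curve $c$ to an honest \emph{embedded} disk. This is precisely where Dehn's lemma is needed: triviality of $c$ in $\pi_1(H_\alpha)$ produces only a singular disk, and it is the embeddedness of $c$ in $\Sigma$ that licenses removing the singularities while fixing the boundary. Everything else is bookkeeping with the definitions of $K_\alpha$, $K_\beta$, reducibility of a splitting, and the normality observation above.
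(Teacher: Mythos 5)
Your proof is correct and follows the same overall route as the paper, which disposes of the biconditional with the single sentence ``follows immediately from the definitions'' and then quotes Haken's lemma for the final claim. The one substantive difference is that you correctly identify the step the paper elides: in the reverse implication, membership of $[c]$ in $K_\alpha$ only yields a \emph{singular} disk in $H_\alpha$ with boundary $c$, and promoting this to an \emph{embedded} disk (as the definition of a reducible splitting requires) is exactly Dehn's lemma / the loop theorem applied to the embedded curve $c \subset \partial H_\alpha$. So your write-up is, if anything, more honest than the paper's: the forward implication and the normality/conjugacy bookkeeping really are definitional, but the reverse implication rests on a nontrivial theorem, and you have placed it where it belongs. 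Your handling of the remaining points (essentiality of $c$ from $x \neq 1$, and the final sentence via Haken's lemma plus the forward implication) matches the paper exactly.
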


\begin{proof}
    The first follows immediately from the definitions.  The second part follows from Haken's lemma.  
\end{proof}

Note that given any irreducible Heegaard splitting of an 3-manifold,
the group $K_\alpha \cap K_\beta$ is a nontrivial subgroup of a surface group that,
by Proposition \ref{no embedded}, cannot contain any elements
that can be represented by embedded curves.
Other examples of this phenomenon, in fact finite index examples,
are known (see \cite{malestein2010selfintersection}, \cite{livingston2000maps}).  

The following lemma is immediate upon considering the
``$4g$-gon with a hole'' picture of a genus $g$ surface 
with one boundary component.      

\begin{lemma}
    \label{lem:commutator}
    Let $X$ be a topological space with basepoint $p \in X$ and let
    $A, B \leq \pi_1(X,p)$ be two normal subgroups.
    An element $\gamma \in \pi_1(X,p)$ is in $[A,B]$ if an only if
    there is a continuous map
    $f \colon (S,\partial S) \to (X,\gamma)$ where $S$ is a genus $g$ orientable surface
    with one boundary component such that the images of the curves
    $a_i$ and $b_j$ in Figure \ref{fig:surface_grope}
    are in the normal subgroups
    $A$ and $B$ respectively for $1 \leq i,j \leq g$.  
\end{lemma}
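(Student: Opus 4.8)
The plan is to prove the two implications separately. The key algebraic input, valid for any genus $g$ surface $S$ with a single boundary component, is that $\pi_1(S)$ is free on the $2g$ generators $a_1,b_1,\dots,a_g,b_g$ of Figure~\ref{fig:surface_grope} and that the boundary is the product of commutators $[\partial S]=\prod_{i=1}^{g}[a_i,b_i]$. This is precisely what the ``$4g$-gon with a hole'' picture exhibits: reading the edge word of the outer polygon (based at the vertex, along the arc to the hole) yields $a_1b_1a_1^{-1}b_1^{-1}\cdots a_gb_ga_g^{-1}b_g^{-1}$. Both directions then reduce to applying $\pi_1$ to this relation.

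For the ``if'' direction, suppose $f\colon(S,\partial S)\to(X,\gamma)$ is given with $f_*(a_i)\in A$ and $f_*(b_i)\in B$ for all $i$. Functoriality applied to the boundary relation gives $\gamma=f_*[\partial S]=\prod_{i=1}^{g}[f_*(a_i),f_*(b_i)]$, a product of commutators of elements of $A$ with elements of $B$, so $\gamma\in[A,B]$.

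For the ``only if'' direction, let $\gamma\in[A,B]$. By definition $\gamma$ is a finite product of commutators $[a,b]^{\pm1}$ with $a\in A$ and $b\in B$. The one step that is not purely formal is to eliminate the inverses so that $\gamma$ appears as an honest product $\prod_{i=1}^{g}[a_i,b_i]$ with each $a_i\in A$ and $b_i\in B$; for this I would invoke the normality of $A$ through the identity $[a,b]^{-1}=[\,bab^{-1},\,b^{-1}\,]$, in which $bab^{-1}\in A$ by normality and $b^{-1}\in B$. Concatenating the resulting expressions realizes $\gamma=\prod_{i=1}^{g}[a_i,b_i]$ for some $g$ (the number of commutators fixes the genus, consistent with the existential quantifier in the statement). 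I would then construct $f$ by choosing based loops in $X$ representing these $a_i\in A$ and $b_i\in B$ and sending the edges $a_i,b_i$ of the $4g$-gon to them (equivalently, defining a map on the wedge of $2g$ circles onto which $S$ deformation retracts and precomposing with the retraction). This produces $f\colon(S,\partial S)\to(X,\gamma)$ with $f_*(a_i)=a_i\in A$, $f_*(b_i)=b_i\in B$, and $f_*[\partial S]=\prod_i[a_i,b_i]=\gamma$ by the boundary relation.

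The only genuine obstacle is the inverse-elimination step above: a product of commutators and their inverses must be rewritten as a product of commutators all of the correct asymmetric form $[a_i,b_i]$ with $a_i\in A$, $b_i\in B$, and this is exactly where the hypothesis that $A$ (or, symmetrically, $B$) is normal is used. Everything else is the routine identification $[\partial S]=\prod[a_i,b_i]$ together with functoriality of the fundamental group, which is why the lemma is immediate from the $4g$-gon-with-a-hole picture.
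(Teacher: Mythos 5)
Your proof is correct and follows exactly the route the paper intends: the paper offers no written proof, asserting only that the lemma is ``immediate upon considering the $4g$-gon with a hole picture,'' and your argument is precisely the fleshed-out version of that, using $[\partial S]=\prod_i[a_i,b_i]$ plus functoriality in one direction and realizing a product of commutators by a map of the $4g$-gon in the other. Your identification of the one genuinely non-formal step --- rewriting $[a,b]^{-1}$ as $[bab^{-1},b^{-1}]$ using normality so that $\gamma$ becomes an honest product $\prod_i[a_i,b_i]$ --- is exactly right and is the detail the paper's ``immediate'' glosses over.
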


\begin{figure}
    \centering
    \includegraphics[scale=0.4]{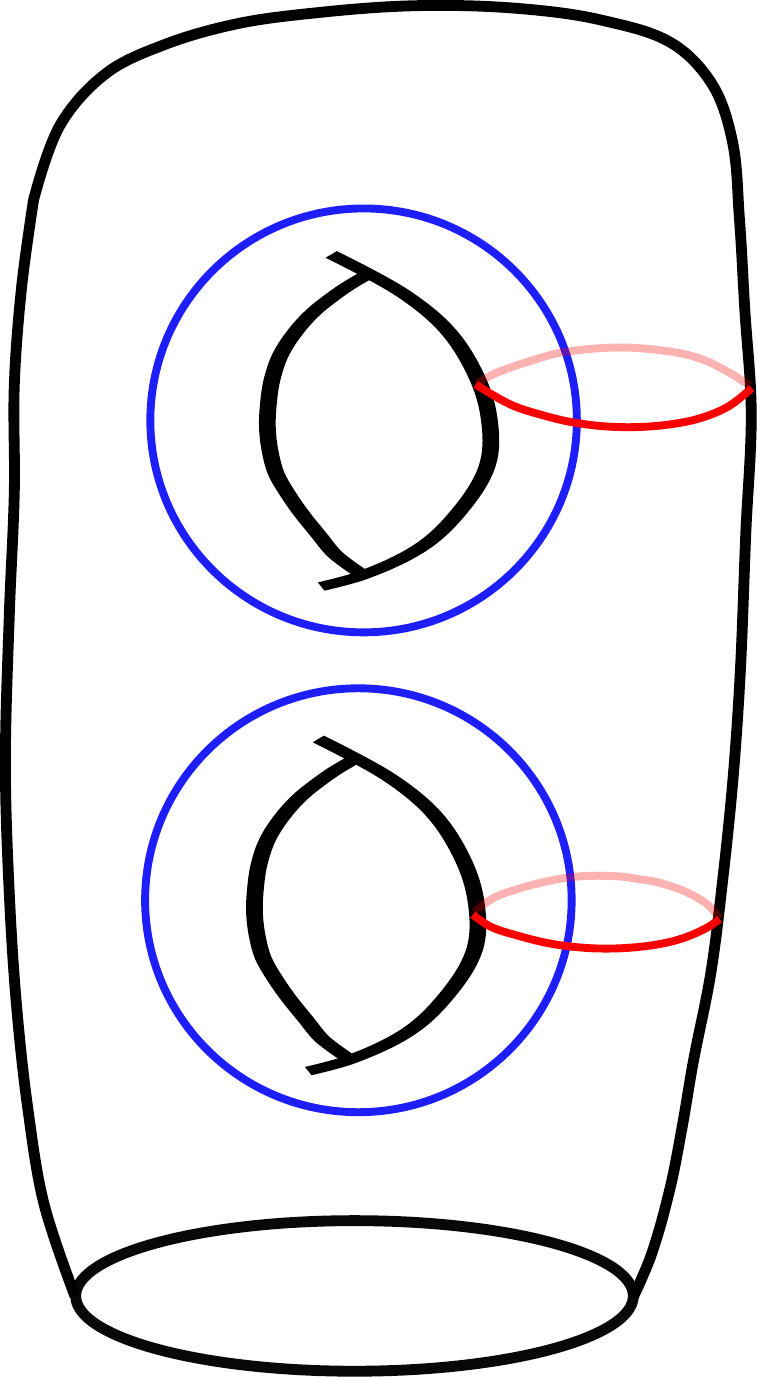}
    \centering
    \caption{This figure shows a genus 2 example.  The basepoint is assumed to be on the boundary of the surface.  The curves in red are the $a_i$ curves and the curves in blue are the $b_j$ curves.  Note here that $A$ and $B$ are assumed to be normal subgroups, so the lack of a basepoint for the curves is irrelevant -- i.e., if some choice of arcs to the basepoint results in based curves that are in $A$ and $B$, respectively, then so too does any other choice of arcs.  }
    \label{fig:surface_grope}
\end{figure}

Let $F$ be a surface in a 3-manifold $M$.
By \textit{adding a tube} to $F$, we mean creating a new surface $F'$,
that is obtained from $F$ by taking the symmetric difference of $F$
with the boundary of an embedded $(D^2 \times [0,1], D^2 \times \{ 0,1\}) \to (M, F)$
such that the image of $D^2 \times (0,1)$ is disjoint from $F$.

\begin{lemma}
    \label{lem:tubes}
    Let $F$ be a compact surface in $S^3$.
    There is a surface $F'$ obtained from adding tubes to $F$
    such that $F'$ is isotopic to the standard genus $g$ Heegaard splitting surface of $S^3$.  Moreover, if $F$ is a surface in $B^3$ with $F \cap \partial B^3$ connected, then we can add tubes to $F$ to obtain a standard surface as in Figure
    \ref{fig:trivial_in_ball}.  
\end{lemma}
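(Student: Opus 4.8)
The plan is to put $F$ into Morse position with respect to a height function and then use tubes to cancel all but one local maximum and all but one local minimum, after which the surface is forced into standard position. First I would pass to $\mathbb{R}^3 = S^3 \setminus \{\infty\}$ and note that, since $H_1(S^3;\mathbb{Z}/2)=0$, the surface $F$ is two-sided and hence orientable, separating $S^3$ into two compact regions. If $F$ is disconnected I would first add a tube joining two of its components (a tube whose two feet lie on distinct components), which decreases the number of components by one; after finitely many such tubes I may assume $F$ is connected. Isotoping $F$ slightly, I may assume the coordinate $z$ restricts to a Morse function on $F$ with distinct critical values, so that $F$ has finitely many local minima, saddles, and local maxima.

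Next I would remove the excess critical points by tubing. Given two local maxima, I would excise a small cap-disk around each and join the two resulting boundary circles by a thin tube routed entirely above the top of $F$; the only new critical points created are a single maximum at the top of the tube and a single saddle, so this move decreases the number of local maxima by one while leaving the local minima untouched. Iterating reduces $F$ to a single local maximum, and the symmetric construction, with tubes routed below $F$, reduces it to a single local minimum. At this stage the tubed surface $F'$ has exactly one maximum, one minimum, and, by the count $\chi(F') = 1 - (\#\,\text{saddles}) + 1$, an even number $2g$ of saddles, consistent with genus $g$.

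It then remains to recognize such an $F'$ as the standard genus $g$ Heegaard surface. Sweeping upward from the unique minimum, the level circles start as a single circle bounding a disk and are modified at each saddle; since a single maximum caps everything off, the region on each side of $F'$ is assembled monotonically from a single $0$-handle together with $1$-handles and is therefore a handlebody, exhibiting $F'$ as a genus $g$ Heegaard surface for $S^3$. I expect this final identification to be the main obstacle: producing the two handlebodies is routine, but verifying that the embedding is the \emph{standard} one (equivalently, that the cores of the $1$-handles are unknotted and unlinked) requires either a careful monotonicity argument using that the whole configuration lies in a slab, or an appeal to Waldhausen's theorem that every Heegaard splitting of $S^3$ is standard.

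For the second statement I would run the same argument relative to the boundary. Taking $B^3 = \{\,|x| \le 1\,\}$ with the radial function $|x|$ as height, so that $\partial B^3$ is the top level set, I would put $F$ in Morse position as a properly embedded surface; the hypothesis that $F \cap \partial B^3$ is connected means the picture has a single boundary circle sitting at the top. Tubing the interior maxima and minima together exactly as above, with all tubes kept in the interior of $B^3$, then reduces $F$ to the standard model of Figure \ref{fig:trivial_in_ball}. The only additional point to check is that this reduction never disturbs the single boundary curve, which is immediate since every tube is added in the interior of $B^3$.
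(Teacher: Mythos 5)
Your overall strategy (Morse position plus tubing to cancel critical points, then a sweep to recognize a Heegaard surface) is genuinely different from the paper's, which never puts the surface in Morse position at all: the paper takes each complementary region, which is a compact $3$-manifold with boundary and hence has a handle decomposition with handles of index $\le 2$, drills out the $2$-handles to leave a handlebody (drilling a tunnel from one side is adding a tube to $F$, equivalently adding a $1$-handle to the other side, which preserves being a handlebody), and does this on both sides in turn. Unfortunately your route has two genuine gaps, one in the reduction and one in the conclusion.

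First, the cancellation move is not always available. To kill a local maximum you need the tube joining it to another maximum to be height-monotone away from its top, i.e.\ ``routed entirely above the top of $F$,'' and such a tube must be embedded in the complement of $F$. If the local maximum is shadowed by other sheets of $F$ --- two nested spheres already do this, as does a single sphere with a finger pushed in and curled under itself --- every embedded arc from that maximum to the region above $F$ must dodge those sheets, so the tube acquires new maxima and minima of its own and the count of local maxima need not decrease. Some additional idea (reordering critical points, an innermost/outermost induction, or a thin-position argument) is needed to make the reduction terminate. Second, and more seriously, even after reaching one minimum and one maximum the final step fails: it is not true that a closed surface with a single minimum, a single maximum, and $2g$ saddles has handlebodies on both sides. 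The sweep only shows that the region into which the minimum opens is built from a $0$-handle and $1$-handles; the other region necessarily receives a $2$-handle at the global maximum, and each saddle contributes its $1$-handle to only one of the two sides. Concretely, the boundary of a regular neighborhood of a knotted theta-graph positioned with its two vertices as the global extrema and its three edges monotone has exactly one minimum, one maximum, and four saddles, yet its exterior is not a handlebody. So the statement you are trying to prove --- that tubes can always be added to make \emph{both} sides handlebodies --- is exactly what the one-max-one-min normal form does not give you, and the paper's argument via $3$-dimensional handle decompositions of the complementary regions (followed, implicitly, by Waldhausen's theorem to identify the resulting splitting as the standard one) is the step you would need to substitute here.
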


\begin{proof}
    Recall that any 3-manifold $M$ with nonempty boundary has a handle decomposition with a 0-handle, 1-handles, and then 2-handles -- one way of seeing this is that $M$ admits a Morse function that is constant on the boundary and that has increasing index critical points.
    By carving out the 2-handles, we obtain a handlebody and therefore, from every 3-manifold with boundary, we can obtain a handlebody if we carve out enough tubes.  Now let $F$ be a surface in $S^3$.
    Note that carving tubes out of one side of $F$ corresponds to adding 1-handles to the other side, and notice that adding 1-handles to a handlebody produces another handlebody (with higher genus).  Therefore, by adding tubes to one side of $F$, we can obtain a new surface that bounds a handlebody on one side, and by adding tubes to the other side, we can obtain a surface that bounds handlebodies on both sides.   The moreover statement is obtained from this result by adding a ball with a standard equator disk to the $B^3$ containing $F$, with the boundary of the disk glued to the boundary of $F$, and performing the above argument with all of the tubes not intersecting this trivial disk-ball pair.  
\end{proof}

\begin{figure}
    \centering
    \includegraphics[scale=0.4]{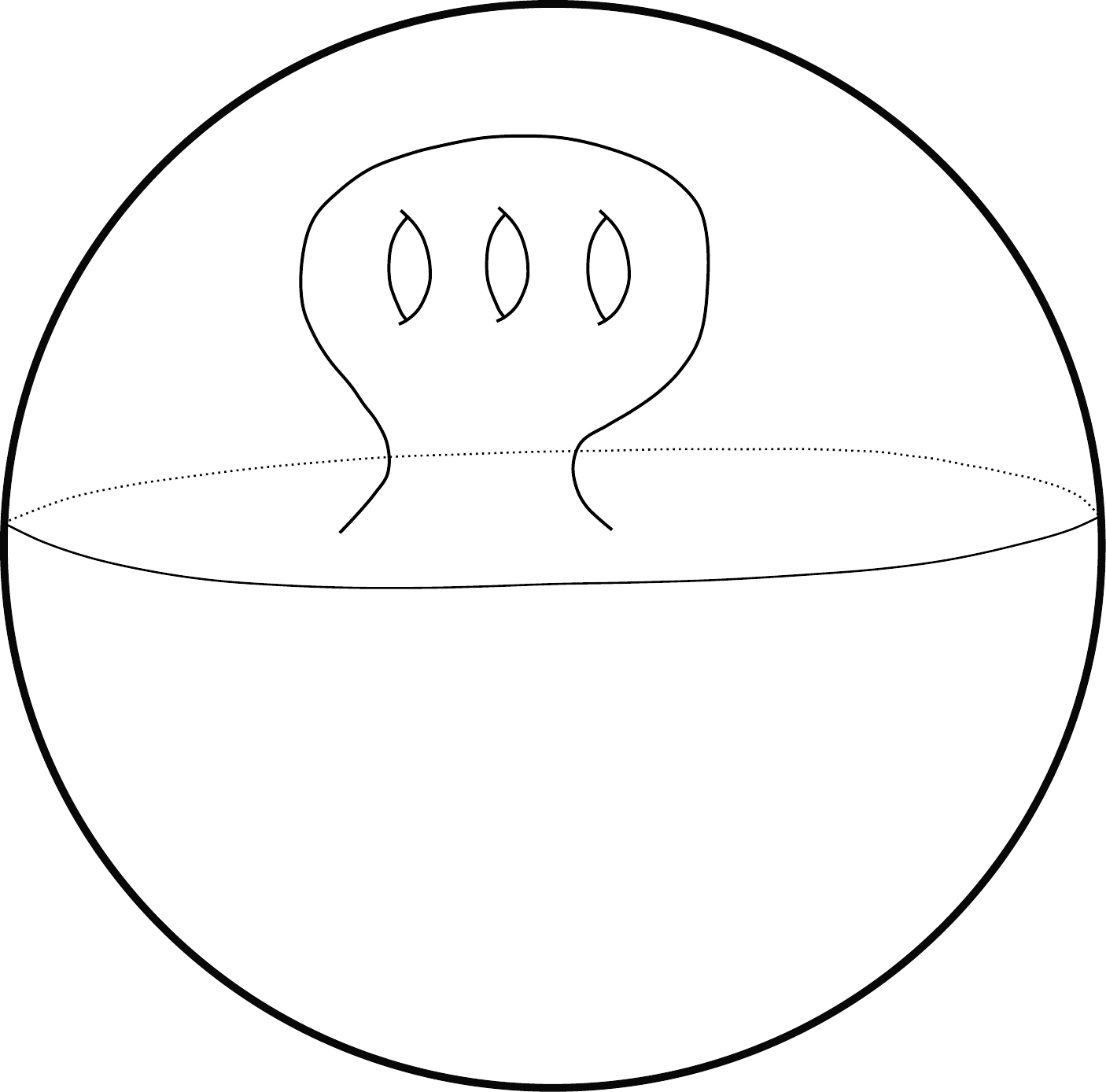}
    \centering
    \caption{This figure shows a genus 3 example of the trivial surface in a 3-ball.  The boundary of the surface is the equator and the surface is the result of stabilizing (as with Heegaard splittings) the equitorial disk three times.   }
    \label{fig:trivial_in_ball}
\end{figure}

\begin{lemma}
    \label{lem:surjective}
    Let $\Sigma$ be a Heegaard splitting surface of a closed orientable 3-manifold $M$,
    $F$ be a compact connected surface, and $f \colon F \to M$ be a continuous map.
    Then $f$ can be homotoped so that $f^{-1}(\Sigma)$ is connected.  
\end{lemma}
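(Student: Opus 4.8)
The plan is to put $f$ in general position and then repeatedly merge components of the preimage until only one remains, using the surjectivity of $\pi_1(\Sigma) \to \pi_1(H_\alpha)$ to perform each merge by a homotopy supported in a small band. Write $M = H_\alpha \cup_\Sigma H_\beta$. First, using that $\pi_1(H_\alpha) \to \pi_1(M)$ is surjective, I would homotope $f$ so that $f(\partial F) \subset H_\alpha^\circ$ is disjoint from $\Sigma$ (homotope each boundary loop into $H_\alpha^\circ$ and extend over a collar), and then make $f$ transverse to $\Sigma$ relative to $\partial F$. The preimage $C := f^{-1}(\Sigma)$ is then a disjoint union of embedded circles in the interior of $F$, and $F \setminus C = f^{-1}(H_\alpha^\circ) \sqcup f^{-1}(H_\beta^\circ)$.

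This gives a $2$-coloring of the regions (components of $F \setminus C$) by which handlebody they map into, and since crossing a circle of $C$ switches sides, adjacent regions receive opposite colors; in particular no circle of $C$ has the same region on both of its local sides. I would encode this in the dual graph $G$ whose vertices are the regions and whose edges are the circles of $C$. Then $G$ is connected (because $F$ is), has no loops, and the number of distinct circles bounding a region $R$ equals the degree of $R$ in $G$. If $C$ is empty the map lands in one handlebody and there is nothing to do, so I assume $C \neq \emptyset$, in which case $G$ has at least two vertices.

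The main step is the merging move. Suppose $C$ has at least two components. Since $G$ is connected with at least two edges, some vertex has degree at least two, so some region $R$ has boundary containing two distinct circles $c_1, c_2$ of $C$; say $f(R) \subset H_\alpha^\circ$. Choose an embedded arc $\delta \subset R$ from a point $p_1 \in c_1$ to a point $p_2 \in c_2$, so that $f \circ \delta$ is a path in $H_\alpha$ with endpoints on $\Sigma$ and interior in $H_\alpha^\circ$. Here is where I use the Heegaard hypothesis: because $\pi_1(\Sigma) \to \pi_1(H_\alpha)$ is surjective, for any arc $\eta$ in $\Sigma$ from $p_1$ to $p_2$ the loop $(f \circ \delta) \ast \bar\eta$ is homotopic rel $p_1$ into $\Sigma$, and hence $f \circ \delta$ is homotopic rel endpoints to a path lying entirely in $\Sigma$. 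Using this homotopy to drag a thin band $\delta \times [-\epsilon, \epsilon]$ across $\Sigma$, pushing its core through $\Sigma$ into $H_\beta^\circ$ and keeping $f$ fixed outside the band, I would replace the two short sides of the band on $c_1$ and $c_2$ by its two long sides. This is exactly the band sum of $c_1$ and $c_2$ along $\delta$, and it reduces the number of components of $C$ by one while keeping $f$ transverse to $\Sigma$.

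Iterating the merging move terminates with $C$ connected, proving the lemma. I expect the main obstacle to be making this move precise: one must check that the surjectivity-driven homotopy can be taken with support in an arbitrarily small neighborhood of $\delta$, so that no new intersections with $\Sigma$ are created elsewhere and the rest of $C$ is untouched, and that after a final transverse perturbation the new preimage is genuinely the band sum, with exactly one fewer circle. The bookkeeping that every circle of $C$ separates its two local sides into opposite colors — which is what guarantees a region of degree at least two — is the other point needing care, but it follows cleanly from the fact that $\Sigma$ is two-sided and separates $M$.
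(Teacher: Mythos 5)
Your proposal is correct and takes essentially the same route as the paper's proof: make $f$ transverse to $\Sigma$, find an arc joining two components of $f^{-1}(\Sigma)$ whose image lies in a single handlebody, homotope that arc rel endpoints into $\Sigma$ (the paper justifies this by pushing a properly embedded arc off the spine of the handlebody, you justify it via surjectivity of $\pi_1(\Sigma)\to\pi_1(H_\alpha)$ --- the same fact), and then push a band across $\Sigma$ to merge the two components. Your dual-graph bookkeeping and your handling of $\partial F$ are just more explicit versions of steps the paper leaves implicit.
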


\begin{proof}
    First, homotope $f$ such that it is an immersion transverse to $\Sigma$.  Let $\gamma_1$ and $\gamma_2$ be two distinct connected components of $f^{-1}(\Sigma)$ such that there is an arc $a \subset F$ with endpoints in $\gamma_1$ and $\gamma_2$ respectively and such that $f(a)$ is entirely contained in $H_\alpha$ or $H_\beta$.  Note that any properly embedded arc in a handlebody can be homotoped relative to its endpoints so as to be contained in the boundary of the handlebody.
    Therefore, by homotoping $f$ close to $a$ following this homotopy of $a$ into $\Sigma$ we obtain a new map with one fewer connected component in the inverse image of $\Sigma$.  Repeating this process then gives the result.   
\end{proof}

\section{$\pi_2(M)$ from splitting homomorphisms}

The following result is given by Lei and Wu in \cite{lei2011kernels}, where
it is stated that it follows from general methods in \cite{brown1987vanKampen}.  Here we provide a hands-on geometric argument.

\begin{theorem} (Lei and Wu)
    \label{thm:pi_2}
    For every closed $3$-manifold $M$,
    there is an isomorphism of abelian groups
    \begin{equation*}
        \pi_2(M)
        \cong
        (K_\alpha \cap K_\beta) / [K_\alpha, K_\beta]
    \end{equation*}
    
    Let $p \colon \pi_1(\Sigma) \twoheadrightarrow \pi_1(M)$
    be the surjection induced by inclusion.
    Then $(K_\alpha \cap K_\beta) / [K_\alpha, K_\beta]$ has a well-defined
    $\pi_1(M)$-action given by $g \cdot [\gamma] = [g' \gamma g'^{-1}]$
    for $g \in \pi_1(M)$ with $g'$ a choice of preimage $p(g') = g$ and
    $[\gamma] \in (K_\alpha \cap K_\beta) / [K_\alpha, K_\beta]$.
    With respect to this action, the above isomorphism is a
    $\mathbb{Z} \pi_1(M)$-module isomorphism.  
\end{theorem}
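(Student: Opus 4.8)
The plan is to construct an explicit homomorphism $\Psi \colon K_\alpha \cap K_\beta \to \pi_2(M)$ by capping and to prove it is surjective with kernel exactly $[K_\alpha, K_\beta]$; the $\mathbb{Z}\pi_1(M)$-structure is then checked separately. Given $\gamma \in K_\alpha \cap K_\beta$ represented by a based loop in $\Sigma$, the fact that $\gamma \in K_\alpha$ means $\gamma$ bounds a map of a disk $g_\alpha \colon (D^2, \partial D^2) \to (H_\alpha, \gamma)$, and likewise $\gamma \in K_\beta$ gives $g_\beta \colon (D^2, \partial D^2) \to (H_\beta, \gamma)$; gluing these two disks along $\gamma$ produces a map of a sphere into $M$, and I set $\Psi(\gamma)$ to be its class. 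The key point making this well defined is that $H_\alpha$ and $H_\beta$ are handlebodies, so $\pi_2(H_\alpha) = \pi_2(H_\beta) = 0$; hence each capping disk is unique up to homotopy rel $\partial$, and $\Psi(\gamma)$ depends only on $\gamma$. That $\Psi$ is a homomorphism is the usual ``pinch'' computation: a capping disk for $\gamma_1\gamma_2$ is assembled from cappings of $\gamma_1$ and $\gamma_2$ joined by a band, exhibiting the glued sphere as the $\pi_2$-sum. Surjectivity is where I invoke Lemma~\ref{lem:surjective}: any class in $\pi_2(M)$ is represented by $f\colon S^2 \to M$ which I may homotope so that $f^{-1}(\Sigma)$ is a single circle $\gamma$, separating $S^2$ into two disks mapped into $H_\alpha$ and $H_\beta$; then $[f\circ\gamma]\in K_\alpha \cap K_\beta$ and, by the uniqueness of cappings, $f \simeq \Psi([f\circ\gamma])$.

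For $[K_\alpha, K_\beta] \subseteq \ker\Psi$ it suffices, since $\Psi$ is a homomorphism, to treat a single $\gamma \in [K_\alpha, K_\beta]$, and I would build an explicit null-homotopy from the surface supplied by Lemma~\ref{lem:commutator}. That lemma gives a map $S \to \Sigma$ of a genus $h$ surface with one boundary component sending $\partial S$ to $\gamma$ and the standard curves $a_i, b_j$ into $K_\alpha, K_\beta$. Identifying $S$ with the standard surface in the ball of Figure~\ref{fig:trivial_in_ball} --- whose complementary pieces $V_\alpha, V_\beta \subset B^3$ are handlebodies with meridian disks bounded by the $a_i$ and $b_j$ --- I extend the map over $B^3$: over each meridian disk of $V_\alpha$ I use the disk in $H_\alpha$ bounding the image of $a_i$ in $K_\alpha$, similarly for $V_\beta$, and the remaining $3$-balls are filled using $\pi_2(H_\alpha) = \pi_2(H_\beta) = 0$. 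The boundary of the resulting map $B^3 \to M$ is exactly the capped sphere $\Psi(\gamma)$, so $\Psi(\gamma) = 0$.

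The reverse inclusion $\ker\Psi \subseteq [K_\alpha, K_\beta]$ is the crux and the step I expect to be hardest. Suppose $\Psi(\gamma) = 0$, so after a transverse homotopy there is $G \colon B^3 \to M$ restricting to the capped sphere, and set $W = G^{-1}(\Sigma)$, a compact surface properly embedded in $B^3$ with $\partial W = \gamma$ connected and $G(W) \subset \Sigma$. The surface $W$ separates $B^3$ into $V_\alpha = G^{-1}(H_\alpha)$ and $V_\beta = G^{-1}(H_\beta)$. I would like to apply Lemma~\ref{lem:commutator} to $G|_W \colon W \to \Sigma$, for which I must produce a standard symplectic basis $\{a_i, b_j\}$ of $W$ with $G(a_i) \in K_\alpha$ and $G(b_j) \in K_\beta$; a curve has this property as soon as it bounds a disk on the corresponding side, since its image then bounds a disk in the corresponding handlebody. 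Here I use the ``moreover'' part of Lemma~\ref{lem:tubes}: adding tubes to $W$ puts it in the standard position of Figure~\ref{fig:trivial_in_ball}, whose basis curves bound compressing disks on alternating sides. The genuine obstacle is bookkeeping the sides: I must add the tubes compatibly with the splitting $B^3 = V_\alpha \cup_W V_\beta$ --- each tube carved from one side, equivalently realized as a homotopy of $G$ rel $\gamma$ --- so that after standardization the $a_i$ still bound disks inside $G^{-1}(H_\alpha)$ and the $b_j$ inside $G^{-1}(H_\beta)$, while $[\gamma]$ is unchanged. Granting this, Lemma~\ref{lem:commutator} yields $\gamma \in [K_\alpha, K_\beta]$, completing the group isomorphism $(K_\alpha \cap K_\beta)/[K_\alpha, K_\beta] \xrightarrow{\cong} \pi_2(M)$.

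Finally, for the module statement I would first check that the action is well defined: two preimages of $g \in \pi_1(M)$ differ by an element of $\ker p = \langle K_\alpha, K_\beta\rangle$, and for $k \in K_\alpha$ and $\gamma \in K_\alpha \cap K_\beta \subseteq K_\beta$ one has $k\gamma k^{-1}\gamma^{-1} = [k,\gamma] \in [K_\alpha, K_\beta]$ (and symmetrically for $k \in K_\beta$), so conjugation by $\ker p$ is trivial on the quotient. Equivariance of $\Psi$ is then the geometric observation that conjugating $\gamma$ by a loop $g'$ in $\Sigma$ corresponds to transporting the capped sphere around the image loop $g = p(g')$ in $M$, which is exactly the definition of the $\pi_1(M)$-action on $\pi_2(M)$; tracking the two capping disks along this transport gives $\Psi(g'\gamma g'^{-1}) = g\cdot \Psi(\gamma)$.
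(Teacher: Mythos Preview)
Your proposal is correct and follows essentially the same route as the paper: define the capping map, invoke Lemma~\ref{lem:surjective} for surjectivity, Lemma~\ref{lem:commutator} (together with the standard surface of Figure~\ref{fig:trivial_in_ball}) for $[K_\alpha,K_\beta]\subseteq\ker\Psi$, and Lemma~\ref{lem:tubes} plus Lemma~\ref{lem:commutator} for the reverse inclusion. The bookkeeping step you flag with ``Granting this'' is handled in the paper exactly by the observation already used in Lemma~\ref{lem:surjective}---any properly embedded arc in a handlebody can be homotoped rel endpoints into the boundary---so each tube of Lemma~\ref{lem:tubes} is realized by a homotopy of $G$ rel $\gamma$ that respects the two sides.
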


\begin{proof}
    We first define a map
    $\phi \colon K_\alpha \cap K_\beta \to \pi_2(M)$.
    Notice that, given a curve $\gamma \in K_\alpha \cap K_\beta$ then
    by the definition of $K_\alpha$ and $K_\beta$,
    there exist disks $D_\alpha \subset H_\alpha$ and $D_\beta \subset H_\beta$
    such that $\gamma = \partial D_\alpha = \partial D_\beta$.
    If $D'_\alpha$ and $D'_\beta$ are other such disks,
    then the spheres $D_\alpha \cup_\gamma D_\beta$ and $D'_\alpha \cup_\gamma D'_\beta$
    are homotopic with the homotopy fixing the basepoint,
    since $\pi_2$ of a handlebody is trivial
    and so the union of the disks can be extended to a
    map from a 3-ball.
    Therefore, we have a map $\phi$ 
    which is seen to be a group homomorphism.
    
    The surjectivity of $\phi$ is the content of Lemma \ref{lem:surjective} in the case where $F$ is a sphere.  
    
    We will now show that the map $\phi$ descends
    to the quotient $K_\alpha \cap K_\beta / [K_\alpha, K_\beta]$.
    As a preliminary remark, observe that the commutator
    $[K_{\alpha}, K_{\beta}]$ is a subgroup of the intersection
    $K_{\alpha} \cap K_{\beta}$,
    because the kernels $K_{\alpha}, K_{\beta}$
    are normal subgroups of $\pi_{1}(\Sigma)$.
    To see that $[K_\alpha, K_\beta] \leq \ker(\phi)$
    note that by Lemma \ref{lem:commutator}, there exists a surface $S$
    as in Figure \ref{fig:surface_grope} and a continuous map
    $f \colon (S, \partial S) \to (\Sigma, \gamma)$ such that
    $f(a_i) \in K_\alpha$ and $f(b_j) \in K_\beta$.
    Therefore, we have immersed disks $D^\alpha_i, D^\beta_i $ with
    $\partial D^\alpha_i = f(a_i)$ and $\partial D^\beta_j = f(b_j)$.
    We thus have a capped surface (i.e., the result of adding to the $a_i$ and $b_j$ curves in Figure \ref{fig:surface_grope})
    mapping into $M$ with the boundary mapping to $\gamma$ -- call the image $\hat{S}$.
    Now consider a neighborhood of $\hat{S}$ which is topologically a ball.  Thus, we obtain a map of a 2-sphere  whose equator maps to $\gamma$ and such that the two disks bounding the equator map to disks $D_\alpha$ and $D_\beta$ in $H_\alpha$ and $H_\beta$, respectively.
    Then $\phi(\gamma) = 0$ since $D_\alpha \cup_\gamma D_\beta$ bounds a ball.  
    
    We now prove injectivity of the resulting map
    $$
    \phi: (K_\alpha \cap K_\beta) / [K_\alpha, K_\beta] \to \pi_2(M)
    $$
    Suppose that $\gamma \in \ker(\phi)$ and let
    $g \colon B^3 \to M$ with
    $g(\partial B^3) = D_\alpha \cup_\gamma D_\beta = \phi(\gamma)$.
    If $g^{-1}(\Sigma)$ is standard as in Figure 
    \ref{fig:trivial_in_ball}, then, 
    by Lemma \ref{lem:commutator}, we have that
    $\gamma \in [K_\alpha, K_\beta]$.
    If $g^{-1}(\Sigma)$ is not standard, then,
    by homotoping $g$, we can add tubes
    to $g^{-1}(\Sigma)$, since such a tube maps to a thickened arc in $M$ that is either contained in $H_\alpha$ or $H_\beta$ (where as in the proof of Lemma \ref{lem:tubes}, every arc can be homotoped to lie in the boundary).  Therefore, by Lemma \ref{lem:tubes},
    we can make $g^{-1}(\Sigma)$ standard.
    Thus $\ker(\phi) \leq [K_\alpha, K_\beta]$
    and therefore $\phi$ induces an isomorphism of abelian groups
    $K_\alpha \cap K_\beta / [K_\alpha, K_\beta] \cong \pi_2(M)$.

    Now, consider the action of $\pi_1(M)$ on $(K_\alpha \cap K_\beta)/[K_\alpha, K_\beta]$ as in the statement of the result.  By our construction of $\phi$ and the definition of the action of $\pi_1(M)$ on $\pi_2(M)$, we see that $\phi$ is in fact an isomorphism of $\mathbb{Z}[\pi_1(M)]$-modules, thus proving the result.  
\end{proof}

\begin{remark}	
    We have not been discussing the subgroup
    $K_\alpha K_\beta \leq \pi_1(\Sigma)$, which is another subgroup of interest (here $K_\alpha K_\beta$ denoted the join of $K_\alpha$ and $K_\beta$ -- i.e., the smallest subgroup containing boht of them).  
    Note that $\pi_1(\Sigma) / K_\alpha K_\beta \cong \pi_1(M)$
    (see \cite{stallings66hownot}) and the cover of $\Sigma$
    corresponding to $K_\alpha K_\beta$ is the preimage of $\Sigma$
    in the universal cover for $M$.
    It then follows, as in Theorem \ref{thm:1}, that either 
	\begin{enumerate}
	\item $\pi_1(M)$ is finite and $K_\alpha K_\beta$ is isomorphic to the fundamental group of a closed orientable surface, or
	\item $\pi_1(M)$ is infinite and $K_\alpha K_\beta$ is an infinitely generated free group.  
	\end{enumerate}
\end{remark}

\bibliographystyle{plain} 
\bibliography{references}

\end{document}